\begin{document}

\newtheorem{theorem}{Theorem}
\newtheorem{lemma}[theorem]{Lemma}
\newtheorem{claim}[theorem]{Claim}
\newtheorem{cor}[theorem]{Corollary}
\newtheorem{prop}[theorem]{Proposition}
\newtheorem{definition}{Definition}
\newtheorem{question}[theorem]{Open Question}
\newtheorem{conj}[theorem]{Conjecture}
\newtheorem{prob}{Problem}

\numberwithin{equation}{section}
\numberwithin{theorem}{section}
% \numberwithin{remark}{section}
\numberwithin{table}{section}
\def\ssssum{\mathop{\sum\!\sum\!\sum\!\sum}}
\def\sssum{\mathop{\sum\!\sum\!\sum}}
\def\ssum{\mathop{\sum \sum}}
\def\dsum{\mathop{\sum \sum}}
\def\iint{\mathop{\int\ldots \int}}

\def\squareforqed{\hbox{\rlap{$\sqcap$}$\sqcup$}}
\def\qed{\ifmmode\squareforqed\else{\unskip\nobreak\hfil
\penalty50\hskip1em\null\nobreak\hfil\squareforqed
\parfillskip=0pt\finalhyphendemerits=0\endgraf}\fi}

\def\cA{{\mathcal A}}
\def\cB{{\mathcal B}}
\def\cC{{\mathcal C}}
\def\cD{{\mathcal D}}
\def\cE{{\mathcal E}}
\def\cF{{\mathcal F}}
\def\cG{{\mathcal G}}
\def\cH{{\mathcal H}}
\def\cI{{\mathcal I}}
\def\cJ{{\mathcal J}}
\def\cK{{\mathcal K}}
\def\cL{{\mathcal L}}
\def\cM{{\mathcal M}}
\def\cN{{\mathcal N}}
\def\cO{{\mathcal O}}
\def\cP{{\mathcal P}}
\def\cQ{{\mathcal Q}}
\def\cR{{\mathcal R}}
\def\cS{{\mathcal S}}
\def\cT{{\mathcal T}}
\def\cU{{\mathcal U}}
\def\cV{{\mathcal V}}
\def\cW{{\mathcal W}}
\def\cX{{\mathcal X}}
\def\cY{{\mathcal Y}}
\def\cZ{{\mathcal Z}}

\def\sfH{\mathsf H} 
\def\sH{\mathscr H} 

\def\MNL{{\mathfrak M}(N;K,L)}
\def\VNL{V_m(N;K,L)}
\def\RNL{R(N;K,L)}

\def\MNm{{\mathfrak M}_m(N;K)}
\def\VNm{V_m(N;K)}

\def\Xm{\cX_m}

\def \C {{\mathbb C}}
\def \F {{\mathbb F}}
\def \L {{\mathbb L}}
\def \K {{\mathbb K}}
\def \Q {{\mathbb Q}}
\def \R {{\mathbb R}}
\def \Z {{\mathbb Z}}

\def \Fp {{\mathbb F}_p}

\def\\{\cr}
\def\({\left(}
\def\){\right)}
\def\fl#1{\left\lfloor#1\right\rfloor}
\def\rf#1{\left\lceil#1\right\rceil}

\def \Prob{{\mathrm {}}}
\def\e{\mathbf{e}}
\def\ep{\e_p}
\def\Res{\mathrm{Res}}
\def\mand{\qquad \text{and} \qquad}
\def \li {\mathrm {li}\,}

\newcommand{\comm}[1]{\marginpar{%
\vskip-\baselineskip %raise the marginpar a bit
\raggedright\footnotesize
\itshape\hrule\smallskip#1\par\smallskip\hrule}}

\title{Multiplicative Properties of Hilbert Cubes}

 \author[I.~E.~Shparlinski]{Igor E. Shparlinski}
 \address{School of Mathematics and Statistics, University of New South Wales.
 Sydney, NSW 2052, Australia}
 \email{igor.shparlinski@unsw.edu.au}
\date{}

\begin{abstract} We obtain upper bounds on the cardinality of Hilbert cubes in 
finite fields, which avoid large product sets and reciprocals of sum sets.  In particular,
our results replace recent estimates of N.~Hegyv{\'a}ri and  P.~P.~Pach (2020), which 
appear to be void for all admissible parameters. Our approach is different from that 
of N.~Hegyv{\'a}ri and  P.~P.~Pach and is based on  some well-known bounds
of double character and exponential sums
over arbitrary sets, due to A. A. Karatsuba (1991) and N. G. Moshchevitin (2007), 
respectively. 
\end{abstract}

\keywords{Hilbert cubes, product sets, reciprocals of sum sets, character sums, exponential sums}
\subjclass[2010]{11B13, 11B30, 11L07, 11L40}

\maketitle

\section{Introduction}

\subsection{Set-up and background}

Let $p$ be a prime. Given  $d+1$ elements  $a_0, a_1, \ldots, a_d \in \Fp$ of a finite field $\F_p$ 
of $p$ elements,  the corresponding {\it  Hilbert cube\/} of {\it dimension\/} $d$   is defined as
the following set 
\begin{equation}\label{eq:Haa}
\sfH\(a_0; a_1, \ldots, a_d\) =\left \{a_0+\sum_{i=1}^d \varepsilon_ia_i : ~\varepsilon_i \in \{0,1\}\right\}.
\end{equation}

Since Hilbert cubes have a very strong additive structure, it is natural to study 
how  their interact with (for example, avoid) multiplicatively defined sets, such 
as product sets 
$$
\cA\cdot \cB = \{ab:~ a\in \cA, \ b \in \cB\}
$$
and reciprocal sum sets
$$
\(\cA + \cB\)^{-1} = \{(a+b)^{-1} :~ a\in \cA, \ b \in \cB, \ a\ne - b\}
$$
for $\cA, \cB \subseteq \Fp$.

In particular, Hegyv{\'a}ri and  Pach~\cite{HePa} have considered the question of estimating 
the cardinality $\# \cH$ of Hilbert cubes $\cH$ (in terms of $\# \cA$ and $\# \cB$), 
such that $\cH \cap\( \cA\cdot\cB\) =\emptyset$ 
or $\cH \cap \(\cA + \cB\)^{-1} =\emptyset$ for some sets  $\cA, \cB \subseteq \Fp$. 
However, unfortunately the bounds of~\cite[Theorems~3.1 and~3.2]{HePa} 
never improve the trivial bound $\# \cH \le p$. Here we start with  a brief analysis of 
these results. 

First we notice that it appears that in the formulation of~\cite[Theorem~3.1]{HePa} 
the exponent $13/16$ has to be $5/16$, see the last displayed equation at the end 
of the proof  of~\cite[Theorem~3.1]{HePa}. 
Then the  bound  of~\cite[Theorem~3.1]{HePa} takes the following form.
If $\cH \cap\( \cA\cdot\cB\) =\emptyset$  for some sets $\cA \subseteq \Fp$ and $\cB \subseteq \cG$, 
where $\cG\subseteq \Fp^*$  is a multiplicative subgroup of order $\# \cG = O\(p^{3/4}\)$, then 
\begin{equation}
\label{eq:HP Triv1} 
\# \cH \le  c \frac{p^{9/4}\(\# \cG\)^{1/4}}{\(\#\cA\)^{1/2-\eta/4}\(\#\cB\)^{5/8}},
\end{equation}
with some absolute constant $c>0$, where the parameter $\eta>0$ controls the 
additive structure of $\cA$ (its additive energy);  we refer to the formulation
of~\cite[Theorem~3.1]{HePa}  for more details. 
%where  $U \ll V$ means that $|U| \le c V$ with some absolute constant
%$c> 0$. 

Since $\eta >0$, $\#\cA \le p$, $\#\cB \le \#\cG  \le p^{3/4}$ (see~\cite[Theorem~3.1]{HePa} 
for more details on these parameters), 
we conclude that the right hand side of~\eqref{eq:HP Triv1}  satisfies 
\begin{align*}
 \frac{p^{9/4}\(\# \cG\)^{1/4}}{\(\#\cA\)^{1/2-\eta/4}\(\#\cB\)^{5/8}} &
 \ge   \frac{p^{9/4}\(\# \cG\)^{1/4} }{p^{1/2}\(\#\cG\)^{5/8}}\\
& =  \frac{p^{9/4}}{p^{1/2}\(\#\cG\)^{3/8}}
  \ge   \frac{p^{9/4}}{p^{1/2+9/32}} = p^{47/32},
\end{align*}
substantially exceeding the trivial bound $\# \cH \le  p$. 

Furthermore, the bound~\cite[Theorem~3.2]{HePa}
takes form
\begin{equation}
\label{eq:HP Triv2} 
\# \cH \le  16\frac{p^{3}}{\#\cA\#\cB}. 
\end{equation}
Since $\cA, \cB \subseteq \Fp$, the right hand side of~\eqref{eq:HP Triv2}
is at least $32p$, which again exceeds the trivial bound $\# \cH \le  p$.

We also notice that a series of very interesting results on product $\cH \cH$ and ratio  $\cH \cH^{-1}$  sets
of Hilbert cubes have recently been given by Shkredov~\cite{Shkr}. 
Moreover, Shkredov~\cite{Shkr} has also given dual results on sum sets 
of multiplicatively defined analogues of Hilbert cubes. 

\subsection{Notation and conventions}
Throughout the paper, the notations $U = O(V)$, 
$U \ll V$ and $ V\gg U$  are equivalent to $|U|\leqslant c V$ for some positive constant $c$, 
which may depend on the integer positive parameter $r$.

To avoid complicated expressions, especially in the exponents, we deviate from the canonical 
rule and  write fractions of the form $1/km$ to mean $1/(km)$ 
(rather than $(1/k)\cdot m$).

\subsection{New results}

Here we obtain  new results on the size of Hilbert cubes which avoid product sets
and reciprocal sum sets. Our method is different from that of Hegyv{\'a}ri and  Pach~\cite{HePa} 
and is based on some bounds of double character and exponential sums.
%%, and is very close to the 
%%argument used in~\cite{Shp}. 

In particular, we unify both events $\cH \cap (\cA\cdot\cB) = \emptyset$ and $\cH \cap \(\cA + \cB\)^{-1} =\emptyset$ in the following result.

\begin{theorem}\label{thm:H-Small}
    Let $\cA, \cB \subseteq \Fp^*$ and let $\cH \subseteq \Fp$ be a Hilbert
    cube with 
    $$\cH \cap (\cA\cdot\cB) = \emptyset\qquad \text{or}
    \qquad \cH \cap \(\cA + \cB\)^{-1} =\emptyset.
    $$ 
    Then for any positive
    integer  $r$  %% with $\fl{p^{1/2r}} < \# \cH$, 
    we have
$$
\#\cH \ll   \frac{p^{2r + 1/2+1/2r}}{\(\#\cA\#\cB\)^r} .
$$
\end{theorem}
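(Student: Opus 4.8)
The plan is to exploit the additive structure of the Hilbert cube $\cH = \sfH(a_0;a_1,\dots,a_d)$ via the representation function. Write $\cH$ as a sumset: if $N(x)$ denotes the number of ways to write $x = a_0 + \sum_{i=1}^d \varepsilon_i a_i$, then $\# \cH \le \sum_{x} N(x)$ trivially, but more usefully $\sum_x N(x) = 2^d$ and, by Cauchy--Schwarz, controlling $\sum_x N(x)^2$ (a count of additive quadruples) is the standard way to access such cubes. However, the cleaner route here — and the one suggested by the abstract's mention of Karatsuba's and Moshchevitin's double-sum bounds — is to use multiplicative/additive characters. First I would handle the product-set case: if $\cH \cap (\cA\cdot\cB) = \emptyset$, then for every $h \in \cH$, $a \in \cA$, $b \in \cB$ we have $h \ne ab$, equivalently $h b^{-1} \ne a$, i.e. the (multiplicative) translate structure forces the number of solutions to $h = ab$ to be zero. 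The key identity is to count $T = \#\{(h,a,b) \in \cH\times\cA\times\cB : h = ab\}$ two ways: it is $0$ by hypothesis, while expanding the indicator of $h = ab$ over multiplicative characters $\chi$ mod $p$ gives
$$
0 = T = \frac{1}{p-1}\sum_{\chi} \(\sum_{h \in \cH}\chi(h)\)\(\sum_{a\in\cA}\bar\chi(a)\)\(\sum_{b\in\cB}\bar\chi(b)\),
$$
so the principal character term $\frac{\#\cH \,\#\cA\,\#\cB}{p-1}$ is cancelled by the non-principal terms:
$$
\frac{\#\cH\,\#\cA\,\#\cB}{p-1} \le \frac{1}{p-1}\sum_{\chi \ne \chi_0}\Bigl|\sum_{h\in\cH}\chi(h)\Bigr|\,\Bigl|\sum_{a\in\cA}\chi(a)\Bigr|\,\Bigl|\sum_{b\in\cB}\chi(b)\Bigr|.
$$

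Next I would estimate the right-hand side. Pull out $\max_{\chi\ne\chi_0}\bigl|\sum_{h\in\cH}\chi(h)\bigr|$ and bound $\sum_{\chi}\bigl|\sum_a\chi(a)\bigr|\bigl|\sum_b\chi(b)\bigr| \le \bigl(\sum_\chi|\sum_a\chi(a)|^2\bigr)^{1/2}\bigl(\sum_\chi|\sum_b\chi(b)|^2\bigr)^{1/2} = (p-1)(\#\cA\,\#\cB)^{1/2}$ by orthogonality. This reduces everything to a bound on the character sum over the Hilbert cube, $\bigl|\sum_{h\in\cH}\chi(h)\bigr|$. Here is where the multiplicative structure of $\cH$ as a cube (a sumset of the $d$ two-element sets $\{0,a_i\}$, shifted by $a_0$) and the double-character-sum estimate of Karatsuba enters: for the $r$-th moment one writes $h = u + v$ with $u$ ranging over a $2^{\lceil d/2\rceil}$-element subsum set and $v$ over the complementary one, raises to the power $r$, and applies Hölder together with Weil/Karatsuba-type bounds for $\sum_{u,v}\chi((u+v_1)\cdots)$ over arbitrary sets $U,V$. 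The upshot is an estimate of the shape $\bigl|\sum_{h\in\cH}\chi(h)\bigr| \ll p^{1/2r}(\#\cH)^{1-1/2r}$ or similar, valid because a Hilbert cube of size $M$ splits into two sumset factors each of size roughly $\sqrt M$, which is exactly the configuration Karatsuba's double-sum bound is designed for. Substituting back yields
$$
\#\cH\,\#\cA\,\#\cB \ll p^{1/2r}(\#\cH)^{1-1/2r}\,(p-1)(\#\cA\,\#\cB)^{1/2},
$$
and solving for $\#\cH$ after taking $r$-th powers to absorb the fractional exponent gives the claimed $\#\cH \ll p^{2r+1/2+1/2r}/(\#\cA\,\#\cB)^r$ — though the precise bookkeeping of which moment to take and how the $p^{2r}$ and $(\#\cA\#\cB)^{-r}$ emerge will need care, and it is plausible the argument actually bounds a higher power of $\#\cH$ directly.

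For the reciprocal-sum-set case $\cH \cap (\cA+\cB)^{-1} = \emptyset$, the substitution $c = (a+b)^{-1}$ transforms the condition into: for all $h\in\cH$, $a\in\cA$, $b\in\cB$ with $a \ne -b$, one has $h(a+b) \ne 1$, equivalently $h \ne (a+b)^{-1}$. I would instead count solutions to $1/h = a+b$, i.e. introduce $\cH^{-1} = \{h^{-1} : h\in\cH, h\ne 0\}$ and observe $\cH^{-1} \cap (\cA+\cB) = \emptyset$; this is now an \emph{additive} disjointness, which one attacks with \emph{additive} characters $\e_p$ and the same orthogonality scheme, where the role of the Karatsuba double-character-sum bound is played by Moshchevitin's exponential-sum bound over arbitrary sets. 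The one subtlety is that $\cH^{-1}$ is no longer a Hilbert cube, so one cannot directly use a cube-specific bound; but $\# \cH^{-1} = \#\cH$ (discarding at most one element), and Moshchevitin's bound applies to the arbitrary set $\cH^{-1}$ together with the sumset $\cA+\cB$, giving a uniform estimate in terms of cardinalities only — precisely what is needed, and why the final bound is stated uniformly for both cases. The main obstacle I anticipate is getting the exponent arithmetic in the character-sum-over-a-cube step to come out with exactly the stated $2r + 1/2 + 1/2r$ rather than something weaker; in particular one must be careful that the "arbitrary set" bounds lose nothing worse than the square-root-cancellation heuristic predicts, and that splitting the cube into two factors of size $\sqrt{\#\cH}$ does not cost an extra factor that degrades $r$.
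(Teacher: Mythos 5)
Your overall framework (count a quantity that vanishes by the disjointness hypothesis, expand by orthogonality, Cauchy--Schwarz the $\cA$ and $\cB$ character sums to $p(\#\cA\#\cB)^{1/2}$, and feed the cube into Karatsuba/Moshchevitin) is the paper's framework, but the step you route it through does not work: you reduce everything to a bound of the shape $\bigl|\sum_{h\in\cH}\chi(h)\bigr|\ll p^{1/2r}(\#\cH)^{1-1/2r}$ for the \emph{unweighted} sum over the set $\cH$, and Karatsuba's lemma does not give this. Writing $\cH=\cU+\cV$, Karatsuba bounds $\sum_{u\in\cU}\sum_{v\in\cV}\chi(u+v)=\sum_{h\in\cH}r(h)\chi(h)$, where $r(h)$ is the number of representations $h=u+v$; there is no way to strip the weights $r(h)$ (they are neither constant nor bounded in general), so no bound on $\sum_{h\in\cH}\chi(h)$ follows. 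The paper avoids ever forming a character sum over the set $\cH$: it counts solutions of $u+v=ab$ with $(u,v,a,b)\in\cU\times\cV\times\cA\times\cB$, a count which is exactly zero because every $u+v$ lies in $\cH$ and $\cH\cap(\cA\cdot\cB)=\emptyset$ --- multiplicities are irrelevant when the count vanishes --- and then the main term $\gg \#\cA\#\cB\,\#\cU\,\#\cV$ is beaten against Karatsuba applied directly to the double sum over $\cU\times\cV$. This ``count the quadruples, not the elements of $\cH$'' move is the missing idea, and the same issue recurs in your reciprocal case, where you would need an exponential sum over the set $\cH^{-1}$ (or $\cH$); the paper instead counts solutions of $(u+v)^{-1}=a+b$ and applies Moshchevitin to $\sum_{u,v}\ep\(\lambda/(u+v)\)$ over the two factors of the cube, not to $\cA+\cB$.

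Two further points. First, your splitting of the cube by halving the generators into a $2^{\lceil d/2\rceil}$-subsum set and its complement does not guarantee factors of size about $\sqrt{\#\cH}$, since collisions can concentrate in one half; the paper proves a greedy partition lemma ($\cH=\cU+\cV$ with $\#\cU\ge H/R$, $\#\cV\ge R/2$ for any $R\in[2,H/2]$) and, importantly, chooses the \emph{unbalanced} split $R=p^{1/2r}$, so $\#\cV\approx p^{1/2r}$ and $\#\cU\gg Hp^{-1/2r}$; this choice is what makes the second term in Karatsuba's bound dominate and produces the exponent $2r+1/2+1/2r$. Second, even if your hypothesized cube bound were granted, your own arithmetic gives $\#\cH\,\#\cA\,\#\cB\ll p^{1+1/2r}(\#\cH)^{1-1/2r}(\#\cA\#\cB)^{1/2}$, i.e. $\#\cH\ll p^{2r+1}(\#\cA\#\cB)^{-r}$, which is strictly weaker than the stated $p^{2r+1/2+1/2r}(\#\cA\#\cB)^{-r}$ for every $r\ge 2$, so the proposal as written would not recover the theorem.
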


Taking Theorem~\ref{thm:H-Small} with $r = 2$ we see that for 
$$
\#\cA\#\cB\gg p^{15/8+\varepsilon/2}
$$
with an arbitrary $\varepsilon>0$ we obtain a nontrivial upper bound 
$$\#\cH \ll  p^{1-\varepsilon}.
$$
Furthermore, for an  arbitrary $\varepsilon>0$  and $r = \rf{4/\varepsilon}$ we derive from
Theorem~\ref{thm:H-Small}  that for 
$$
\#\cA\#\cB\gg p^{2-1/(4r^2)}
$$
we have 
$$\#\cH \ll  p^{1/2 + \varepsilon}.
$$

\subsection{Applications}

As we have mentioned,  Shkredov~\cite{Shkr} has studied the product $\cH \cH$ and ratio  $\cH \cH^{-1}$  sets
of Hilbert cubes and in particular has given lower bounds on $\#\(\cH \cdot \cH\)$ 
and  $\#\(\cH \cdot \cH^{-1}\)$, 
provided that $\# \cH$ is not too large. We now observe that Theorem~\ref{thm:H-Small} implies the 
following complementing  result,  which applies for large Hilbert cubes and shows that their product and ratio 
sets occupy almost all $\F_p$. 

\begin{cor}\label{cor:HH H/H}
Let  $\cH \subseteq \Fp$ be a Hilbert cube and let 
$$
\cE =  \F_p \setminus \(\cH\cdot \cH\) \mand 
\cF =  \F_p \setminus \(\cH  \cdot\cH^{-1}\) .
$$ 
Then 
$$
\#\cE, \#\cF \ll  p^{19/8} \(\# \cH\)^{-3/2}. 
$$
\end{cor}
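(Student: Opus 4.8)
The plan is to deduce both estimates as formal consequences of Theorem~\ref{thm:H-Small} taken with $r=2$, feeding in the Hilbert cube $\cH$ itself together with an auxiliary factor set built from the relevant complement. For the bound on $\#\cE$, put $\cH^{*}=\cH\setminus\{0\}$ and $\cE^{*}=\cE\setminus\{0\}$, so that $\cE^{*}$, $\cH^{*}$ and $(\cH^{*})^{-1}$ all lie in $\Fp^*$. If some $e\in\cE^{*}$ and $h\in\cH^{*}$ satisfied $eh^{-1}\in\cH$, then $e=(eh^{-1})\cdot h\in\cH\cdot\cH$, contradicting $e\notin\cH\cdot\cH$; hence $\cH\cap(\cE^{*}\cdot(\cH^{*})^{-1})=\emptyset$, and Theorem~\ref{thm:H-Small} with $\cA=\cE^{*}$, $\cB=(\cH^{*})^{-1}$ gives
$$
\#\cH\ll\frac{p^{2r+1/2+1/2r}}{(\#\cE^{*}\,\#\cH^{*})^{r}}.
$$
The argument for $\#\cF$ is the same with a multiplication in place of a reciprocal: writing $\cF^{*}=\cF\setminus\{0\}$, if $f\in\cF^{*}$ and $h\in\cH^{*}$ had $fh\in\cH$ then $f=(fh)\cdot h^{-1}\in\cH\cdot\cH^{-1}$, a contradiction, so $\cH\cap(\cF^{*}\cdot\cH^{*})=\emptyset$ and Theorem~\ref{thm:H-Small} with $\cA=\cF^{*}$, $\cB=\cH^{*}$ yields the same bound with $\cF$ in place of $\cE$.

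It then remains only to tidy up the numerology. If $\#\cH\le 2$ or $\#\cE\le 1$ the claimed inequality $\#\cE\ll p^{19/8}(\#\cH)^{-3/2}$ holds trivially, since in the first case its right-hand side is $\gg p\ge\#\cE$ and in the second it is $\ge p^{19/8}p^{-3/2}=p^{7/8}\ge 1\ge\#\cE$, and the same remarks cover $\cF$. Outside these cases one has $\#\cH^{*}\ge\#\cH/2$ and $\#\cE^{*}\ge\#\cE/2$ (respectively $\#\cF^{*}\ge\#\cF/2$), so taking $r=2$ in the displayed bound and absorbing constants gives $(\#\cH)^{3}(\#\cE)^{2}\ll p^{19/4}$, that is $\#\cE\ll p^{19/8}(\#\cH)^{-3/2}$, and likewise $\#\cF\ll p^{19/8}(\#\cH)^{-3/2}$.

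I do not expect a genuine obstacle: the whole arithmetic content is carried by Theorem~\ref{thm:H-Small}, and the corollary is a purely formal deduction. The only point that wants a little care is the handling of the zero element — the Hilbert cube $\cH$ is passed unchanged into Theorem~\ref{thm:H-Small}, where it is only required to be a Hilbert cube disjoint from the product set and not a subset of $\Fp^*$, while the auxiliary sets $\cE^{*}$, $\cF^{*}$, $\cH^{*}$ are trimmed so as to lie in $\Fp^*$; since each trimming deletes at most one element this costs only a constant factor, harmless once the degenerate small-$\#\cH$ and small-complement cases above have been set aside.
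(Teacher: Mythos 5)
Your proof is correct and follows essentially the same route as the paper: observe that $\cH \cap (\cE\cdot\cH^{-1}) = \emptyset$ and $\cH \cap (\cF\cdot\cH) = \emptyset$ and apply Theorem~\ref{thm:H-Small} with $r=2$ to get $(\#\cH)^3(\#\cE)^2 \ll p^{19/4}$. Your extra care in trimming the zero element (so that $\cA,\cB \subseteq \Fp^*$ as the theorem requires) and in disposing of the degenerate cases is a welcome refinement of a point the paper passes over silently, but it does not change the argument.
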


Indeed, to see Corollary~\ref{cor:HH H/H}, it is enough to notice  that 
$$
\cH \cap (\cE\cdot\cH^{-1}) = \emptyset \mand \cH \cap (\cF\cdot\cH) = \emptyset
$$ 
and apply Theorem~\ref{thm:H-Small} with $r=2$ to 
$$\(\cA, \cB\) = \(\cE, \cH^{-1}\) \mand \(\cA, \cB\) = \(\cF, \cH\).
$$ 
deriving 
$$
\#\cH \ll   \frac{p^{4 + 1/2+1/4}}{\(\#\cF\#\cH\)^2}
$$
in each case.

Corollary~\ref{cor:HH H/H} is nontrivial if $ \#\cH \gg  p^{11/12+\varepsilon}$ with 
some fixed  $\varepsilon>0$. 

In turn, Corollary~\ref{cor:HH H/H} immediately implies the following. 

\begin{cor}\label{cor:H Soubgr} There is an absolute constant $C$ such that if 
$\cH \subseteq \Fp$ be a Hilbert cube with 
$$
\#\cH  \ge C  p^{11/12}
$$
then $\cH$ is not contained in a co-set $\lambda \cG$, $\lambda \in \F_p^*$, of any proper multiplicative subgroup $\cG \subseteq \F_p^*$. 
\end{cor}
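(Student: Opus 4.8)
The plan is to deduce Corollary~\ref{cor:H Soubgr} directly from Corollary~\ref{cor:HH H/H} by arguing contrapositively: suppose $\cH$ is contained in a coset $\lambda\cG$ of a proper multiplicative subgroup $\cG\subsetneq\F_p^*$, and derive an upper bound on $\#\cH$ of the shape $O(p^{11/12})$, which contradicts the hypothesis $\#\cH\ge Cp^{11/12}$ once $C$ is chosen larger than the implied constant.

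The key observation is that if $\cH\subseteq\lambda\cG$, then $\cH\cdot\cH^{-1}\subseteq (\lambda\cG)(\lambda\cG)^{-1}=\cG$, since $\cG$ is a subgroup closed under multiplication and inversion and the factors $\lambda$ cancel. Because $\cG$ is a \emph{proper} subgroup, we have $\#\cG\le (p-1)/2$, and hence the complement $\cF=\F_p\setminus(\cH\cdot\cH^{-1})$ contains $\F_p\setminus\cG$, which has at least $(p-1)/2+1>p/2$ elements. (Alternatively one can simply note $0\notin\cG$ while typically $\cH\cdot\cH^{-1}$ need not contain $0$, but the subgroup-index argument already gives $\#\cF\gg p$.) Thus $\#\cF\gg p$.

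Now I would plug this into Corollary~\ref{cor:HH H/H}, which states $\#\cF\ll p^{19/8}(\#\cH)^{-3/2}$. Combining $p\ll\#\cF\ll p^{19/8}(\#\cH)^{-3/2}$ yields $(\#\cH)^{3/2}\ll p^{19/8-1}=p^{11/8}$, and therefore $\#\cH\ll p^{11/12}$. Choosing $C$ to exceed the absolute implied constant in this last inequality gives the desired contradiction, completing the proof.

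There is essentially no obstacle here: the entire content has been front-loaded into Theorem~\ref{thm:H-Small} and its corollary, and this final statement is a clean packaging of the bound. The only point requiring a moment's care is the passage $\cH\subseteq\lambda\cG\Rightarrow\cH\cdot\cH^{-1}\subseteq\cG$ together with the properness of $\cG$ forcing $\#(\F_p\setminus\cG)\gg p$; both are elementary facts about cosets of subgroups of $\F_p^*$.
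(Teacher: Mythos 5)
Your proof is correct and follows essentially the same route as the paper: a contrapositive application of Corollary~\ref{cor:HH H/H}, using that containment in a coset of a proper subgroup forces the complement of the product/ratio set to have size $\gg p$, whence $p\ll p^{19/8}(\#\cH)^{-3/2}$ and $\#\cH\ll p^{11/12}$. The only (immaterial) difference is that you work with the ratio set $\cH\cdot\cH^{-1}\subseteq\cG$, while the paper uses the product set $\cH\cdot\cH\subseteq\lambda^2\cG$; both sets are covered by Corollary~\ref{cor:HH H/H}.
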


Indeed, if  $\cH\subseteq \lambda \cG$ then  $\cH\cdot \cH \subseteq \lambda^2 \cG$. 
Hence 
$$
\#\( \F_p \setminus \(\cH\cdot \cH\)\) \ge \#\( \F_p \setminus  \lambda^2 \cG\) \le p - (p-1)/2 = (p+1)/2
$$
and by Corollary~\ref{cor:HH H/H}  we obtain the bound of Corollary~\ref{cor:H Soubgr}. 

Corollary~\ref{cor:H Soubgr} complements a series of results in~\cite{AlSh,DES1,DES2,HeSa}  on Hilbert cubes $\cH$ avoiding the sets 
of quadratic non-residues and primitive roots of $\F_p$, which characterise $\cH$ in terms of its dimension $d$ of $\cH$ rather than of its size $\#\cH$. 

 For example,  let $f(p)$ and $F(p)$ be the largest dimension of $\cH$ such that $\cH$ does not 
 contain quadratic non-residues and primitive roots of $\F_p$, respectively. 
 
 Hegyv{\'a}ri and  S{\'a}rk{\"o}zy~\cite[Theorem~2]{HeSa} have proved that $f(p) \le 12p^{1/4}$.
 This has been improved as $F(p) \le p^{1/5+o(1)}$ in~\cite[Theorem~1.3]{DES1} and 
 then as $F(p) \le p^{3/19+o(1)}$ in~\cite[Theorem~1.3]{DES2}. 
Very recently, Alsetri and Shao~\cite{AlSh} have given a further improvement 
and established the bound $F(p) \le p^{1/8+o(1)}$, which is the best  possible estimate 
until the celebrated bound of Burgess~\cite{Burg} on the smaller primitive roots $g(p) \le p^{1/4+o(1)}$
is improved.

\section{Preliminaries}

\subsection{Bounds of double character and exponential sums}

Let $\cX$ be the set of all $p-1$ multiplicative characters of $\Fp$ and 
let $\cX^* = \cX\setminus \{\chi_0\}$ be the set of all  non-principal characters,
 see~\cite[Chapter~3]{IwKow} for a background.

We first recall the following result of Karatsuba~\cite{Kar1}
(see also~\cite[Chapter~VIII, Problem~9]{Kar2}): 

\begin{lemma} \label{lem: Double Char Sum}
    Let $\cU, \cV \subseteq \Fp$ be of cardinalities  $U$ and $V$,
    respectively.  Then for any positive  integer  $r$, 
    for all   $\chi\in \cX^*$, we have
$$
        \sum_{u \in \cU} \sum_{v \in \cV} \chi\(u+v\) \ll U^{1-1/2r}\(V^{1/2}p^{1/2r} + Vp^{1/4r}\),
$$
    where the implied constant depends only on $r$.
\end{lemma}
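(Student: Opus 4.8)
The plan is to bound the double sum $S = \sum_{u\in\cU}\sum_{v\in\cV}\chi(u+v)$ by the standard amplification-and-Weil method. Write $S = \sum_{u\in\cU} T(u)$ with $T(u) = \sum_{v\in\cV}\chi(u+v)$. First I would apply H\"older's inequality in the variable $u$ with exponents $2r$ and $2r/(2r-1)$, and then drop the restriction $u\in\cU$ by extending the range to all of $\Fp$, obtaining
$$
|S| \le U^{1-1/2r}\Bigl(\sum_{u\in\Fp} |T(u)|^{2r}\Bigr)^{1/2r} =: U^{1-1/2r}\,W^{1/2r}.
$$
Everything then reduces to the moment estimate $W \ll V^r p + V^{2r}p^{1/2}$: taking $(\cdot)^{1/2r}$ and using subadditivity of $x\mapsto x^{1/2r}$ gives $W^{1/2r}\ll V^{1/2}p^{1/2r} + Vp^{1/4r}$, which reproduces exactly the two terms in the claimed bound.

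To estimate $W$, I would expand the $2r$-th power and swap the order of summation, writing
$$
W = \sum_{\mathbf v,\mathbf w\in\cV^r}\ \sum_{u\in\Fp}\chi\bigl(R_{\mathbf v,\mathbf w}(u)\bigr),\qquad R_{\mathbf v,\mathbf w}(x) = \frac{\prod_{i=1}^r (x+v_i)}{\prod_{j=1}^r(x+w_j)},
$$
with the convention $\chi(0)=0$ and the finitely many poles of $R_{\mathbf v,\mathbf w}$ excluded, which costs only $O(r)$ per tuple. Let $d>1$ be the order of $\chi$. I would split the outer sum according to whether $R_{\mathbf v,\mathbf w}$ is a perfect $d$-th power in $\overline{\F}_p(x)$. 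For the tuples where it is not, Weil's bound for multiplicative character sums of rational functions applies: since $R_{\mathbf v,\mathbf w}$ has at most $2r$ distinct zeros and poles, each such inner sum is $O_r(p^{1/2})$, and there are at most $V^{2r}$ such tuples, contributing $O_r(V^{2r}p^{1/2})$ in total.

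The main point is the degenerate tuples, for which $R_{\mathbf v,\mathbf w}$ \emph{is} a $d$-th power and the inner sum can be as large as $p$. Here I would use the criterion that $R_{\mathbf v,\mathbf w}=\prod_t (x+t)^{e(t)}$ is a $d$-th power if and only if $d\mid e(t)$ for every $t$, where $e(t)=\mu_{\mathbf v}(t)-\mu_{\mathbf w}(t)$ is the signed multiplicity. The key combinatorial observation is that whenever $d\mid e(t)$ for all $t$, every value occurring in the combined multiset $\{v_1,\ldots,v_r,w_1,\ldots,w_r\}$ must occur there with total multiplicity at least $2$: if $\mu_{\mathbf v}(t)=\mu_{\mathbf w}(t)$ the total is even and positive, while if they differ then $|e(t)|\ge d\ge 2$, so one of the two multiplicities is already $\ge 2$. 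Hence a degenerate $2r$-tuple uses at most $r$ distinct values, and the number of such tuples is $O_r(V^r)$ (choose the $\le r$ values, then fill the $2r$ slots). Each contributes at most $p$, giving a degenerate contribution of $O_r(V^r p)$. Combining the two regimes yields $W\ll_r V^r p + V^{2r}p^{1/2}$ and the lemma follows.

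I would expect the hardest part to be the bookkeeping around the degenerate tuples: correctly stating the $d$-th-power criterion (including the behaviour of the leading coefficient and of the point at infinity), verifying that the combined-multiplicity argument genuinely forces at most $r$ distinct values, and checking that the $\le 2r$ excluded pole and zero terms together with the constant in Weil's theorem all depend only on $r$. The remaining steps are a routine H\"older-and-expand computation.
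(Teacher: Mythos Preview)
The paper does not give a proof of this lemma; it is quoted as a known result of Karatsuba, with a reference to~\cite{Kar1} and to~\cite[Chapter~VIII, Problem~9]{Kar2}. Your proposal reproduces precisely the standard Karatsuba argument that those references contain --- H\"older in $u$ with exponent $2r$, extend the $u$-sum to all of $\F_p$, expand the $2r$-th moment, and split the resulting $2r$-tuples $(\mathbf v,\mathbf w)$ according to whether the rational function $R_{\mathbf v,\mathbf w}$ is a $d$-th power --- and it is correct. The combinatorial count of the degenerate tuples is sound: once $d\mid e(t)$ for all $t$, every value in the combined multiset has total multiplicity at least~$2$, so at most $r$ distinct values occur and the count is $O_r(V^r)$; and for the non-degenerate tuples Weil's bound for character sums with rational-function argument gives the $O_r(p^{1/2})$ inner-sum estimate. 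Since in our case $R_{\mathbf v,\mathbf w}$ is a ratio of monic polynomials of equal degree, the leading-coefficient issue you flag is harmless: $R_{\mathbf v,\mathbf w}$ is a $d$-th power up to a constant if and only if it is a $d$-th power outright, so the degenerate set is exactly the one you describe.
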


%\subsection{Bounds of  exponential sums}
%
%Next we obtain an analogue of Lemma~\ref{lem: Double Char Sum} for exponential sums with 
%reciprocals.

Furthermore, Moschevitin~\cite[Theorem~4]{Mosh} has given an additive analogue  of Lemma~\ref{lem: Double Char Sum} for exponential sums with 
reciprocals.

We denote $\ep(z) = \exp\(2 \pi i z/p\)$.

\begin{lemma} \label{lem: Double Exp Sum}
    Let $\cU, \cV \subseteq \Fp$ be of cardinalities  $U$ and $V$,
    respectively.  Then for any positive  integer  $r$, 
    for all    $\lambda \in \Fp^*$, we have
$$
        \ssum_{\substack{u \in \cU, \ v \in \cV\\ u+v \ne 0}} \ep\(\frac{\lambda}{u+v}\) \ll U^{1-1/2r}\(V^{1/2}p^{1/2r} + Vp^{1/4r}\),
$$
    where the implied constant depends only on $r$.
\end{lemma}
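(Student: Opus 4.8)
The plan is to mirror the Karatsuba-type argument underlying Lemma~\ref{lem: Double Char Sum}, replacing the multiplicative character by the additive character $\ep$ and the linear argument $u+v$ by the reciprocal $1/(u+v)$, and then to control the resulting complete sums by the Weil bound for exponential sums of rational functions. Writing
$$
S = \ssum_{\substack{u \in \cU, \ v \in \cV\\ u+v \ne 0}} \ep\(\frac{\lambda}{u+v}\),
$$
I would first isolate the outer variable $u$ and apply H\"older's inequality with exponent $2r$, then enlarge the range of $u$ from $\cU$ to all of $\Fp$, which only increases a sum of nonnegative terms. This produces
$$
|S|^{2r} \le U^{2r-1} \sum_{u \in \Fp} \left| \sum_{\substack{v \in \cV\\ u+v\ne 0}} \ep\(\frac{\lambda}{u+v}\) \right|^{2r},
$$
and the factor $U^{2r-1}$ is already the source of the eventual $U^{1-1/2r}$.

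Next I would expand the $2r$-th power into a sum over tuples $(v_1,\dots,v_r,w_1,\dots,w_r)\in\cV^{2r}$ and interchange the order of summation, so that the inner object becomes the complete sum
$$
T(\mathbf v,\mathbf w)=\sum_{u} \ep\(\lambda \sum_{j=1}^r\left(\frac{1}{u+v_j}-\frac{1}{u+w_j}\right)\),
$$
where $u$ runs over the elements of $\Fp$ avoiding the poles $-v_j,-w_j$. The key structural observation is a dichotomy governed by the rational function $f(u)=\lambda\sum_j\(1/(u+v_j)-1/(u+w_j)\)$: its only singularities are simple poles, at each point $-a$ with residue $\lambda$ times the excess of the multiplicity of $a$ among the $v_j$ over its multiplicity among the $w_j$. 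Hence $f$ is pole-free, and in fact identically zero, precisely when the multisets $\{v_1,\dots,v_r\}$ and $\{w_1,\dots,w_r\}$ coincide; otherwise $f$ has a genuine simple pole and is non-constant.

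For the diagonal tuples, where the two multisets agree, $f\equiv 0$ and $T(\mathbf v,\mathbf w)$ is merely a count of admissible $u$, bounded trivially by $p$; the number of such tuples is at most $r!\,V^r\ll V^r$. For the off-diagonal tuples I would invoke the Weil bound for rational exponential sums, which gives $T(\mathbf v,\mathbf w)\ll_r p^{1/2}$; these number at most $V^{2r}$. Summing the two contributions yields
$$
\sum_{u\in\Fp}\Big|\sum_{v}\ep\(\lambda/(u+v)\)\Big|^{2r}\ll V^r p+V^{2r}p^{1/2},
$$
so that $|S|^{2r}\ll U^{2r-1}\(V^r p+V^{2r}p^{1/2}\)$. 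Taking $2r$-th roots and using the subadditivity $(A+B)^{1/2r}\le A^{1/2r}+B^{1/2r}$ to split the two terms then produces
$$
|S|\ll U^{1-1/2r}\(V^{1/2}p^{1/2r}+Vp^{1/4r}\),
$$
which is the claimed estimate.

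I expect the main obstacle to be the clean application of the Weil bound: one must verify that every off-diagonal tuple yields a rational function $f$ lying outside the degenerate class for which square-root cancellation can fail. This is where the simple-pole structure is decisive, since a function of the shape $h^{p}-h+\mathrm{const}$ cannot have a pole of order $1<p$, so $f$ non-constant with $\deg f\le 2r<p$ is automatically admissible; one must also track the dependence of the implied constant on the number of distinct poles (at most $2r$) and absorb it into the $r$-dependent constant. The combinatorial bookkeeping—counting the diagonal tuples through equality of multisets—is routine once the pole and residue analysis pins down exactly when $f$ is constant.
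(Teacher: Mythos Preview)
Your argument is correct and is the standard Karatsuba--Weil route to this estimate: H\"older in the $u$-variable, positivity to extend $u$ over all of $\Fp$, expansion of the $2r$-th power, and then the diagonal/off-diagonal split governed by equality of the multisets $\{v_1,\dots,v_r\}$ and $\{w_1,\dots,w_r\}$. The key point---that any off-diagonal tuple forces a genuine simple pole of $f$, and a simple pole rules out the Artin--Schreier degeneracy $f=h^p-h+c$---is exactly right, and the Weil constant depends only on the number of poles ($\le 2r$), so it is absorbed into the $r$-dependent implied constant. One small technicality you should make explicit: the residue at $-a$ equals $\lambda$ times the difference of multiplicities of $a$ in the two multisets, and this difference lies in $[-r,r]\setminus\{0\}$, so it is nonzero in $\Fp$ only when $p>r$; for the finitely many primes $p\le r$ the stated bound follows trivially from $|S|\le UV$ and $U\le p\le r$.

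As for comparison with the paper: there is nothing to compare, since the paper does not prove this lemma at all. It is quoted verbatim as \cite[Theorem~4]{Mosh} (Moshchevitin), in the same way that Lemma~\ref{lem: Double Char Sum} is quoted from Karatsuba. Your sketch is, up to cosmetic differences, the argument one expects to find in~\cite{Mosh}, and it stands on its own.
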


We also note that in some cases the bound of~\cite[Lemma~6]{BBS}
(which has also been repeated as~\cite[Lemma~4.3]{HePa}) 
$$
        \ssum_{\substack{u \in \cU, \ v \in \cV\\ u+v \ne 0}} \ep\(\frac{\lambda}{u+v}\) \le
\sqrt{pUV}
$$
is stronger,  however, it is not useful for the problems of this paper. 

\subsection{Partitioning Hilbert cubes}

We need the following elementary statement, which shows that any Hilbert cube can be represented as a sum set
of two sets of essentially any desired size.

\begin{lemma} \label{lem: Part Hilb}
Let $\cH$ be a Hilbert cube of cardinality  $H$. For any real   $R \in [2, H/2]$ there are two sets 
$\cU$ and $\cV$ such that 
$$
    \cH = \cU + \cV
$$ 
and
$$
 \# \cU \ge H/R \mand \# \cV \ge  R/2.
$$
\end{lemma}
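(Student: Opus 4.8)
The plan is to split the generating set of the Hilbert cube into two blocks and take $\cU$ and $\cV$ to be the sub-cubes they span, calibrating the split by a greedy (one-generator-at-a-time) argument. The one structural fact I would use is that adjoining a single generator to a Hilbert cube multiplies its cardinality by at most $2$, together with the trivial inequality $\#(\cU+\cV)\le \#\cU\cdot\#\cV$, which turns a lower bound on $\#\cV$ into a lower bound on $\#\cU$.

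Concretely, write $\cH=\sfH(a_0;a_1,\dots,a_d)$ and, for $0\le j\le d$, set $\cV_j=\{\sum_{i=d-j+1}^{d}\varepsilon_i a_i:\varepsilon_i\in\{0,1\}\}$, so that $\cV_0=\{0\}$ and $\cV_d=\cH-a_0$, giving $\#\cV_0=1$ and $\#\cV_d=H$. Since $\cV_j=\cV_{j-1}\cup(\cV_{j-1}+a_{d-j+1})$, the sequence $\#\cV_0,\dots,\#\cV_d$ is nondecreasing and satisfies $\#\cV_j\le 2\#\cV_{j-1}$. I would then let $j$ be the least index with $\#\cV_j\ge R/2$; this index exists and satisfies $j\le d$ because $\#\cV_d=H\ge 2R>R/2$ by the hypothesis $R\le H/2$ (and $j=0$ is permitted, which only occurs when $R=2$). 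By minimality, if $j\ge 1$ then $\#\cV_{j-1}<R/2$, hence $\#\cV_j\le 2\#\cV_{j-1}<R$; in all cases $R/2\le\#\cV_j<R$. Finally set $\cV=\cV_j$ and $\cU=\{a_0+\sum_{i=1}^{d-j}\varepsilon_i a_i:\varepsilon_i\in\{0,1\}\}$, so that $\cU+\cV=\cH$ and $\#\cV\ge R/2$, while $\#\cU\ge \#\cH/\#\cV > H/R$ from $\#(\cU+\cV)\le\#\cU\cdot\#\cV$.

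There is no serious obstacle here; the only points that require care are the boundary cases of the greedy step — checking that $j=0$ is harmless, that $j\le d$ really follows from $R\le H/2$, and that the at-most-doubling property guarantees the window $[R/2,R)$ is always entered rather than skipped — after which the size bounds are immediate.
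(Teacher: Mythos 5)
Your proof is correct and is essentially the same argument as the paper's: a greedy, one-generator-at-a-time split of $a_1,\dots,a_d$, using the fact that adjoining a generator at most doubles the cube and the trivial bound $\#(\cU+\cV)\le\#\cU\,\#\cV$. The only (immaterial) difference is that you grow the tail cube $\cV$ into the window $[R/2,R)$ and deduce the bound on $\#\cU$, whereas the paper grows the head cube $\cU$ into $[H/R,2H/R)$ and deduces the bound on $\#\cV$.
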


\begin{proof} We recall the notation~\eqref{eq:Haa} and assume that 
$$
\cH = \sfH(a_0; a_1, \ldots, a_d)
$$ 
for some $a_0, a_1, \ldots, a_d \in \Fp$.   We consider the sequences of Hilbert cubes
$$
\cH_j =  \sfH\(a_0; a_1, \ldots, a_j\), \qquad j = 0, \ldots, d.
 $$
 Clearly $\cH_0 = \{a_0\}$ and $\cH_d = \cH$. 
Let $i$ be the smallest $j$ for which $\# \cH_j\ge H/R$.
In particular $i \ge 1$. Since by the choice of $i$ we have 
$ \#  \cH_{i-1} < H/R$ and also since 
$$
\cH_{i} = \cH_{i-1}  \cup \(\cH_{i-1} + a_i\)
$$
we trivially have $\# \cH_j \le2 \#  \cH_{i-1} < 2H/R\le H$.
We now set  
$$
\cU = \cH_{i}  \mand \cV =  \sfH\(0; a_{i+1} , \ldots, a_d\).
$$
Thus $2H/R >  \# \cU  \ge H/R$. We also  have $\cH = \cU + \cV$ and hence
$ H  \le \# \cU \#\cV < 2H R^{-1} \#\cV$, which concludes the proof. 
\end{proof}

\section{Proof of Theorem~\ref{thm:H-Small}}

\subsection{Bounding the size of Hilbert cubes avoiding product sets}
\label{sec: H AB} 
Assume that  for some sets $\cA, \cB \subseteq \Fp^*$ and a Hilbert cube $\cH \subseteq \Fp$  
we have $\cH \cap (\cA\cdot\cB) = \emptyset$.  

We fix some $r$ and  set $R =p^{1/2r}$.
We also denote 
$$
A= \# \cA, \qquad B= \# \cB, \qquad H =\# \cH.
$$ 

Since $AB \le p^2$, the result is trivial for $H < 2R$.
We can also assume that $p\ge 2^{2r}$ and thus $R \ge 2$. Hence,  we see that the conditions of Lemma~\ref{lem: Part Hilb}
are satisfied. Let $\cU$ and $\cV$ be the corresponding sets.  The condition $\cH \cap (\cA\cdot\cB) = \emptyset$
implies that the number of solutions $N$ to the equation
\begin{equation}
\label{eq:Sum Prod} 
u+v =ab, \qquad a\in \cA, \ b \in \cB, \ u \in \cU, \ v \in \cV,
\end{equation}
satisfies 
\begin{equation}
\label{eq:Vanish} 
N = 0.
\end{equation}

On the other hand, using the orthogonality of characters (and the convention $\chi(0) =0$) we write
\begin{align*}
N &= \sum_{a\in \cA}  \sum_{b \in \cB}  \sum_{u \in \cU}  \sum_{v \in \cV}
\frac{1}{p-1}\sum_{\chi\in \cX}  \chi\((u+v) a^{-1} b^{-1}\)\\
&= \frac{1}{p-1}\sum_{\chi\in \cX}   \sum_{a\in \cA}  \chi\( a^{-1}  \) \sum_{b \in \cB}  
 \chi\(b^{-1}\)\sum_{u \in \cU}  \sum_{v \in \cV} \chi\(u+v\).
\end{align*}
We note that the contribution from the principal character $\chi_0$ is given by 
\begin{align*}
\sum_{\chi\in \cX}   \sum_{a\in \cA}  \chi_0\( a^{-1}  \) \sum_{b \in \cB}  &
 \chi_0\(b^{-1}\)\sum_{u \in \cU}  \sum_{v \in \cV} \chi_0\(u+v\)\\
 & = AB  \ssum_{\substack{u \in \cU, \ v \in \cV\\ u+v \ne 0}} 1 = AB  \(UV + O(U)\) \gg AB UV,
\end{align*}
which together with the vanishing condition~\eqref{eq:Vanish} implies 
\begin{align*}
ABUV&  \ll 
\sum_{\chi\in \cX^*}  \left| \sum_{a\in \cA}  \chi\( a^{-1}  \) \right|  \left|  \sum_{b \in \cB}  
 \chi\(b^{-1}\)\right|  \left| \sum_{u \in \cU}  \sum_{v \in \cV} \chi\(u+v\) \right|\\
 & =  \sum_{\chi\in \cX^*}  \left| \sum_{a\in \cA}  \chi\( a  \) \right|  \left|  \sum_{b \in \cB}  
 \chi\(b\)\right|  \left| \sum_{u \in \cU}  \sum_{v \in \cV} \chi\(u+v\) \right|,
\end{align*}
since $ \chi\( z^{-1}  \)=\overline \chi\( z \)$ for any $z \in \F_p^*$. 

Applying  Lemma~\ref{lem: Double Char Sum} and recalling the choice of $R$, we derive 
\begin{align*}
ABUV&  \ll U^{1- 1/2r}\(V^{1/2}p^{1/2r} + Vp^{1/4r}\) \sum_{\chi\in \cX^*}  \left| \sum_{a\in \cA}  \chi\( a  \) \right|  \left|  \sum_{b \in \cB}  \chi\(b\)\right|  \\
&  \ll U^{1- 1/2r} Vp^{1/4r} \sum_{\chi\in \cX^*}  \left| \sum_{a\in \cA}  \chi\( a  \) \right|  \left|  \sum_{b \in \cB} ,
 \chi\(b\)\right| , 
\end{align*}
which we simplify as 
\begin{equation}
\label{eq:Prelim} 
U^{1/2r} \le \frac{p^{1/4r}}{AB}  \sum_{\chi\in \cX^*}  \left| \sum_{a\in \cA}  \chi\( a  \) \right|  \left|  \sum_{b \in \cB}  \chi\(b\)\right|.
\end{equation}

Furthermore, using the orthogonality of characters, we obtain 
$$
  \sum_{\chi\in \cX^*}  \left| \sum_{a\in \cA}  \chi\( a  \) \right| ^2 
  \le   \sum_{\chi\in \cX}  \left| \sum_{a\in \cA}  \chi\( a  \) \right| ^2 = (p-1)A,
$$
and similarly for the sum over $b\in \cB$.
Hence, by the Cauchy inequality we have
\begin{align*}
\( \sum_{\chi\in \cX^*}  \left| \sum_{a\in \cA}  \chi\( a  \) \right|  \left|  \sum_{b \in \cB} 
 \chi\(b\)\right| \)^2 & \le 
  \sum_{\chi\in \cX^*}  \left| \sum_{a\in \cA}  \chi\( a  \) \right| ^2  \sum_{\chi\in \cX^*}  \left|  \sum_{b \in \cB} 
 \chi\(b\)\right| ^2\\ & \le p^2AB.
\end{align*}
Substituting this inequality in~\eqref{eq:Prelim}  and using that 
$$
U\gg H/R = Hp^{-1/2r}, 
$$
we obtain 
$$
\(Hp^{-1/2r}\)^{1/2r} \ll  \frac{p^{1/4r}}{AB} p (AB)^{1/2} =  \frac{p^{1+1/4r}}{(AB)^{1/2}}
$$
or 
 $$
Hp^{-1/2r}  \ll    \frac{p^{2r+1/2}}{(AB)^r},
$$
which concludes the proof in the case $\cH \cap (\cA\cdot\cB) = \emptyset$. 

\subsection{Bounding the size of Hilbert cubes avoiding reciprocal sum sets}
Let now $\cH \cap \(\cA + \cB\)^{-1} =\emptyset$. 
We proceed exactly as in the case of $\cH \cap (\cA\cdot\cB) = \emptyset$
with the same parameter $R$ and the sets $\cU$ and $\cV$. 
However, instead of~\eqref{eq:Sum Prod} we arrive to the equation 
$$
u+v =(a+b)^{-1}, \qquad a\in \cA, \ b \in \cB, \ u \in \cU, \ v \in \cV,
$$
which we transform into 
$$
(u+v)^{-1}  =a+b, \qquad a\in \cA, \ b \in \cB, \ u \in \cU, \ v \in \cV.
$$
This time we express the number $N$ of solutions to this equation, which still satisfies~\eqref{eq:Vanish} 
via exponential sums
\begin{align*}
N &= \sum_{a\in \cA}  \sum_{b \in \cB}  \sum_{u \in \cU}  \sum_{v \in \cV}
\frac{1}{p}\sum_{\lambda\in \F_p} \ep\(\lambda\((u+v)^{-1} -a -b\)\)\\
&= \frac{1}{p}\sum_{\lambda\in \F_p}   \sum_{a\in \cA}  \ep\(-\lambda a  \) \sum_{b \in \cB}  
 \ep\(-\lambda  b \)\sum_{u \in \cU}  \sum_{v \in \cV}  \ep\(\lambda(u+v)^{-1}\).
\end{align*}
We now separate the contribution from the term corresponding to $\lambda =0$ and then use
Lemma~\ref{lem: Double Exp Sum}  instead of Lemma~\ref{lem: Double Char Sum} 
and the orthogonality of exponential functions. Hence, we  arrive to the same bound as 
in Section~\ref{sec: H AB}. This  concludes the proof in the case $\cH \cap \(\cA + \cB\)^{-1} =\emptyset$. 

\section*{Acknowledgement}

The author would like to thank Ilya Shkredov for his comments and informing the author about~\cite{Mosh}. 

During the preparation of this work, the author was  supported  by the  Australian Research Council  Grant~DP170100786
and by the Natural Science Foundation of China Grant~11871317.

\end{document}